\theoremstyle{plain}
\newtheorem{thm}{Theorem}
\newtheorem{cor}[thm]{Corollary}
\theoremstyle{definition}
\theoremstyle{remark}
\newtheorem{rem}[thm]{Remark}
\def\C{{\mathbb C}}
\def\Z{{\mathbb Z}}
\def\Gg{{\mathcal G}}
\newcommand{\id}{\operatorname{id}}
\begin{document}

\title{All classifiable Kirchberg algebras are $C^{\ast}$-algebras of ample groupoids}

\author{Lisa Orloff Clark}
\author{James Fletcher}
\author{Astrid an Huef}
\address{School of Mathematics and Statistics, Victoria University of Wellington, PO Box 600, Wellington 6140, New Zealand}
\email {lisa.clark@vuw.ac.nz, james.fletcher@vuw.ac.nz, astrid.anhuef@vuw.ac.nz}

\date{\today}
\thanks{This research was supported by the Marsden grant 18-VUW-056 from the
Royal Society of New Zealand.}

\subjclass[2010]{46L05, 46L35 (Primary)}

\keywords{Simple purely infinite $C^*$-algebra, classification of $C^*$-algebras, Kirchberg algebra, groupoid}

\begin{abstract} In this note we show that every Kirchberg algebra in the UCT class is the $C^{\ast}$-algebra of a Hausdorff, ample, amenable and locally contracting groupoid. The non-unital case follows from Spielberg's graph-based models for Kirchberg algebras.  Our contribution is the unital case and our proof builds on Spielberg's construction. \end{abstract}

\maketitle

\section{Introduction}

An ample groupoid is an \'etale groupoid whose topology has a basis of compact open sets.  Its rich topological structure carries over to the associated groupoid $C^{\ast}$-algebra, making this a particularly nice class of algebras to work with.  For example,  having a basis of compact open sets means that the $C^{\ast}$-algebra is the closed span of characteristic functions.

Kirchberg algebras are $C^*$-algebras that are purely infinite, simple, separable and nuclear. The UCT or bootstrap class consists of $C^*$-algebras that are $KK$-equivalent to a separable abelian $C^*$-algebra \cite{Rordam}. The Kirchberg algebras in the UCT class are classified by their $K$-theory by the celebrated classification theorem of Kirchberg  and Phillips \cite{Kirchberg, MR1745197}.  As pointed out by Spielberg in \cite{MR2377136}, because of this classification theorem ``it is possible to prove results about UCT Kirchberg algebras by choosing a convenient model''.

The applications, and hence the models, abound. For example, in \cite{Spielberg-TAMS-2009}, Spielberg shows that
Kirchberg algebras with finitely generated $K$-theory and free $K_1$-group are semiprojective.  To do this he  realises these Kirchberg algebras as  $C^*$-algebras of certain infinite directed graphs, and then shows that these graph $C^*$-algebras are semiprojective. In \cite{RSS}, Ruiz, Sims and S\o rensen  prove that every Kirchberg algebra in the UCT class has nuclear dimension $1$; to do this they show that Kirchberg $2$-graph algebras with trivial $K_0$-group and finite $K_1$-group have nuclear dimension $1$, and that every Kirchberg algebra in the UCT class is a direct limit of $2$-graph algebras. In \cite{MR2377136}, Spielberg shows that all prime-order
automorphisms of $K$-groups of Kirchberg algebras are induced by automorphisms of Kirchberg
algebras having the same order; to do this he uses his graph-based models for Kirchberg algebras from \cite{MR2329002}.

Spielberg uses his construction in \cite{MR2329002} to realise all non-unital Kirchberg algebras as $C^*$-algebras of ample groupoids. In this note we trace through Spielberg's work, and show how it can be adapted to the unital case where we have to keep track of the class of the identity in the $K_0$-group.  Katsura proved  that all Kirchberg algebras in the UCT class are isomorphic to $C^*$-algebras of topological graphs  \cite{Katsura2008}, which are then $C^*$-algebras of amenable, \'etale groupoids by \cite{Yeend}. But the groupoids of topological graphs are usually not ample, and so our model of all Kirchberg algebras as $C^*$-algebras of ample groupoids is new.

There are other models of Kirchberg algebras of interest, and many are based on  groupoids. For example, R\o rdam and Sierakowski show that a countable discrete group admits an action on the Cantor set such that the crossed product is a Kirchberg algebra in the UCT class if and only if $G$ is exact and non-amenable \cite{Rordam-Sierakowski}.  Brown et al developed a technique for realising many Kirchberg algebras as the $C^*$-algebras of amenable, principal groupoids \cite{BCSS}.

This modelling strategy is of course not restricted to Kirchberg algebras and can be used whenever an appropriate classification theorem exists. For example, Putnam models simple tracially AF algebras as $C^*$-algebras of minimal, amenable, \'etale equivalence relations \cite{Putnam}, using the classification theorem of Lin from \cite{Lin}.

\section{Unital Kirchberg algebras}

Let  $G_0$ and $G_1$ be countable abelian groups and $c\in G_0$. By \cite[Theorem~5.6]{Eliott-Rordam},  there exists a  unital Kirchberg algebra $A$ in the UCT class such that   $(K_0(A),  [1_A], K_1(A))$ is isomorphic to $(G_0,  c, G_1)$.
Further, unital Kirchberg algebras in the UCT class are classified by their $K$-groups and  the class of the identity in  $K_0$  \cite[Theorem~4.2.4]{MR1745197}. Thus if $A$ and $B$ are unital UCT Kirchberg algebras and there exists a graded isomorphism $\psi:K_*(A)\rightarrow K_*(B)$ such that $\psi([1_A])=[1_B]$, then there exists an isomorphism $\pi:A\rightarrow B$ such that $\pi_*=\psi$.

\begin{thm}\label{thm-main}
Let $A$ be a unital Kirchberg $C^{\ast}$-algebra in the UCT class. Then there exists a Hausdorff, ample, amenable and locally contracting  groupoid $\Gg$ such that $A$ is isomorphic to the $C^{\ast}$-algebra of $\Gg$.
\end{thm}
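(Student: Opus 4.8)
The plan is to use the classification theorem to replace the problem by a construction, and then to carry Spielberg's argument from \cite{MR2329002} through to the unital setting. Write $(G_0,c,G_1)$ for the isomorphism class of $(K_0(A),[1_A],K_1(A))$, so that $G_0,G_1$ are countable abelian groups and $c\in G_0$. By the results recalled just before the theorem it suffices to construct a Hausdorff, ample, amenable and locally contracting groupoid $\Gg$ with \emph{compact} unit space such that $C^{\ast}(\Gg)$ is a unital Kirchberg algebra in the UCT class with invariant isomorphic to $(G_0,c,G_1)$; then $C^{\ast}(\Gg)\cong A$ by \cite[Theorem~4.2.4]{MR1745197}.

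First I would take Spielberg's non-unital model off the shelf: by \cite{MR2329002} there is a Hausdorff, ample, amenable, locally contracting, second-countable groupoid $\mathcal H$ with non-compact unit space such that $C^{\ast}(\mathcal H)$ is a stable Kirchberg algebra in the UCT class with $K_i(C^{\ast}(\mathcal H))\cong G_i$, and by the classification of stable Kirchberg algebras we may take $C^{\ast}(\mathcal H)\cong A\otimes\mathcal K$. Since $C^{\ast}(\mathcal H)$ is simple, $\mathcal H$ is minimal, and because $\mathcal H$ is \'etale the saturation of every nonempty open subset of $\mathcal H^{(0)}$ is then all of $\mathcal H^{(0)}$; in particular every nonempty compact open subset of $\mathcal H^{(0)}$ is full. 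Suppose now that one can locate a compact open $U\subseteq\mathcal H^{(0)}$ with $[1_U]=c$ in $K_0(C^{\ast}(\mathcal H))\cong G_0$. Then the reduction $\Gg:=\mathcal H|_U=\{\gamma\in\mathcal H:s(\gamma),r(\gamma)\in U\}$ is Hausdorff and ample with \emph{compact} unit space $U$; the set $s^{-1}(U)$ implements a groupoid equivalence between $\Gg$ and $\mathcal H$, so $\Gg$ is amenable and $C^{\ast}(\Gg)\cong 1_UC^{\ast}(\mathcal H)1_U$, a full corner of $A\otimes\mathcal K$ --- hence a unital Kirchberg algebra in the UCT class with $K$-theory $K_*(A)$ and $[1_{C^{\ast}(\Gg)}]=[1_U]=c$. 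After checking that $\Gg$ inherits local contraction from $\mathcal H$ (using that $U$ is clopen and that open bisections of $\mathcal H$ may be shrunk to lie in $\mathcal H|_U$), the classification theorem gives $C^{\ast}(\Gg)\cong A$, as required.

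Everything therefore comes down to producing the compact open set $U$ with $[1_U]=c$, and this is where one must genuinely ``keep track of the class of the identity'' rather than use \cite{MR2329002} as a black box; I expect it to be the main obstacle. One route is to prove that in $C^{\ast}(\mathcal H)$ every projection is Murray--von Neumann equivalent to $1_U$ for some compact open $U\subseteq\mathcal H^{(0)}$ and then apply this to the image of the full projection $1_A\otimes e_{11}\in A\otimes\mathcal K$, whose class is $c$; via Cuntz comparison in the purely infinite simple algebra $C^{\ast}(\mathcal H)$ this amounts to checking that $c$ lies in the subsemigroup of $K_0$ generated by classes of compact open subsets of the unit space. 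The route I suspect is actually taken is to re-run Spielberg's construction itself: in \cite{MR2329002} the group $K_0(C^{\ast}(\mathcal H))$ is computed as a limit/cokernel whose generators are classes $[1_{U_i}]$ of compact open sets built from the combinatorial data, and one adapts the construction so that a distinguished compact open set of class $c$ is present, propagating the bookkeeping of its class through each stage while leaving $(G_0,G_1)$, amenability and local contraction untouched (one could even dispense with the stabilisation and build $\Gg$ directly from a unital seed such as the groupoid of $\mathcal O_\infty$, using compactness-preserving substitutes for any step of Spielberg's induction that would make $\Gg^{(0)}$ non-compact while tracking $[1_{\Gg^{(0)}}]$ throughout). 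The remaining ingredients --- invariance of amenability and of the Kirchberg properties under groupoid equivalence and passage to full corners, stability of ampleness and the Hausdorff property under reduction, separability from second countability, and the UCT from the bootstrap class together with the Morita invariance of $KK$-equivalence --- are standard, so the real content is this combinatorial bookkeeping of $[1]$.
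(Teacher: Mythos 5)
Your overall architecture is the same as the paper's: reduce via classification to realising the invariant $(G_0,c,G_1)$, take Spielberg's non-unital groupoid model, restrict it to a compact open subset $U$ of the unit space with $[1_U]=c$, observe that the resulting corner is full (by simplicity), and invoke \cite[Theorem~4.2.4]{MR1745197}. But the one step you explicitly leave open --- ``producing the compact open set $U$ with $[1_U]=c$'' --- is precisely the content of the theorem, and your proposal does not carry it out. Your first suggested route (every projection in $C^{\ast}(\mathcal H)$ is Murray--von Neumann equivalent to $1_U$ for some compact open $U\subseteq\mathcal H^{(0)}$, or equivalently that $c$ lies in the subsemigroup of $K_0$ generated by such classes) is not something you can get for free: for a general ample groupoid with purely infinite simple $C^{\ast}$-algebra it is not known that compact open subsets of the unit space exhaust $K_0$, so this would itself need an argument tied to the specific model. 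Your second suggested route is indeed what the paper does, but as stated it is a plan, not a proof: you would still have to exhibit the distinguished compact open set and verify that its class really is $c$ after all the identifications.

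Concretely, the paper fills this gap as follows. Spielberg's input graphs are chosen via \cite[Theorem~2.1]{MR2377136} so that \emph{every element of $G_0$ is a vertex of $E_0$} and the isomorphism $K_0(C^{\ast}(E_0))\cong G_0$ sends the vertex projection class $[p_g]$ to $g$; in particular $c$ itself is a vertex. Pairing $c$ with the unique vertex $w$ of $F_0$ gives the vertex $(c,w)$ of the $2$-graph $E_0\times F_0$, which sits inside the hybrid graph $\Omega$, and Spielberg's isomorphism $\pi:C^{\ast}(\Omega)\to C^{\ast}(\Gg)$ carries $p_{(c,w)}$ to $1_U$ with $U=Z((c,w))\times\{0\}\times Z((c,w))$ compact open in $\Gg^{(0)}$. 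One then defines $\Gg'=U\Gg U$ and traces $[1_U]$ through the chain of maps: the corner inclusion $j$ (an isomorphism on $K$-theory since the corner is full, \cite[Proposition~1.2]{Paschke}), $\pi^{-1}$, the $K$-theory isomorphism induced by the inclusion $i$ of $C^{\ast}((E_0\times F_0)\sqcup(E_1\times F_1))$ into $C^{\ast}(\Omega)$ from \cite[Theorem~4.7]{MR2329002}, and finally the K\"unneth isomorphism, where one must check (because $C^{\ast}(E_0)$ is non-unital) that $\alpha([p_c]\otimes[1_{C^{\ast}(F_0)}])=[p_c\otimes 1_{C^{\ast}(F_0)}]$ --- this naturality check is Remark~\ref{remark-kunneth} of the paper. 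The conclusion $[1_U]\mapsto c$ is exactly the bookkeeping you anticipated but did not supply; without it (and without some verification that the reduction $\Gg'$ remains locally contracting and amenable, which you only sketch), the argument is an outline rather than a proof.
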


\begin{proof}
Let $G_0$ and $G_1$ be countable abelian groups and let $c\in G_0$.  It suffices to show that  there exists a groupoid with the stated properties such that its $C^{\ast}$-algebra is a unital Kirchberg algebra in the UCT class with $K$-groups isomorphic to  $(G_0, G_1)$, and that this isomorphism carries the class of the identity  to $c$.

We start by following the steps briefly outlined on page~367 of \cite{MR2329002} which prove that there exists a Hausdorff, ample, amenable and locally contracting groupoid $\Gg$ such that $C^{\ast}(\Gg)$ is a non-unital Kirchberg algebra and $K_*(C^{\ast}(\Gg))=(G_0, G_1)$.  We will then show that there is a reduction $\Gg'$ of $\Gg$ that meets our requirements.

We apply \cite[Theorem~2.1]{MR2377136}  to get directed graphs $E_0$ and $E_1$ such that
\begin{itemize}
\item each $E_i$ is strongly connected\footnote{Spielberg calls strongly connected graphs irreducible. The connection is that  the graph is strongly connected if and only if the vertex matrix is irreducible.},
\item each element of $G_i$ is a vertex in $E_i$,
\item
$K_*(C^{\ast}(E_0))\cong (G_0, 0) \text{ and }K_*(C^{\ast}(E_1))\cong (G_1, 0)$, and
\item for $g\in G_i$, the   isomorphism of $K_0(C^{\ast}(E_i))$ onto $G_i$ sends  the class  of the vertex projection $p_g$ to $g$ in $G_i$.
\end{itemize}
(In the notation of \cite[Theorem~2.1]{MR2377136}, we apply the theorem with $G=G_i$, $A=G_i$, $\Gamma=\{\id\}$ and $\pi_0=\id$.)

We let $F_0$ be the directed graph with one vertex $w$ and infinitely many loops; then  $K_*(C^{\ast}(F_0))\cong (\Z, 0)$.  We let $F_1$ be any strongly connected directed graph such that $K_*(C^{\ast}(F_1))\cong (0,\Z)$. (For example, depending on conventions for paths, take $F_1$ to be the graph of \cite[Figure~6]{MR2377136} or its opposite graph.)

Since $K_*(C^{\ast}(F_0))\cong (\Z, 0)$ is torsion free, the  K\"unneth Theorem for tensor products (\cite[Theorem~2.14]{Schochet}) gives a graded isomorphism
\begin{align*}
\alpha: K_0\big(C^{\ast}(E_0)&\otimes C^{\ast}(F_0)\big)\oplus K_1\big(C^{\ast}(E_0)\otimes C^{\ast}(F_0)  \big)
\\
&\to \big(K_0(C^{\ast}(E_0))\oplus K_1(C^{\ast}(E_0)) \big)\otimes\big( K_0(C^{\ast}(F_0))\oplus K_1(C^{\ast}(F_0))\big).\notag
\end{align*}
Taking $0$-graded parts gives an isomorphism
\begin{align*}\label{iso-Kunneth2}
\alpha:  K_0\big( C^{\ast}(E_0)&\otimes  C^{\ast}(F_0)\big)\\
&\to  \big( K_0(C^{\ast}(E_0))\otimes K_0(C^{\ast}(F_0))\big)\oplus \big(K_1(C^{\ast}(E_0))\otimes K_1(C^{\ast}(F_0))\big),\notag
\end{align*}
and this gives
\begin{equation*}\label{eq-alpha}
K_0\big( C^{\ast}(E_0)\otimes  C^{\ast}(F_0)\big)
\stackrel{\alpha}{\cong} (G_0\otimes\Z) \oplus (0\otimes 0)\cong G_0.
\end{equation*}
The Cartesian product  $E_0\times F_0$ is a $2$-graph, and, adapting the argument of \cite[Corollary~3.5(iv)]{MR1745529} to the more general finitely aligned setting,
we have
\begin{equation*}\label{eq-2graph}
C^{\ast}(E_0\times F_0)\cong C^{\ast}(E_0)\otimes C^{\ast}(F_0);
\end{equation*}
the isomorphism sends the vertex projection $p_{(c,w)}$ to $p_c\otimes p_w$.
Thus
\begin{equation}\label{eq-00} K_0(C^{\ast}(E_0\times F_0))\cong G_0.
\end{equation}
Similar calculations show that
\begin{equation}\label{more-Kunneth}
K_1(C^{\ast}(E_0\times F_0))=0, \quad K_0(C^{\ast}(E_1\times F_1)) =0 \text{\ and\ } K_1(C^{\ast}(E_1\times F_1))\cong  G_1.
\end{equation}

Taking the union of $E_0\times F_0$ and $E_1\times F_1$ gives another $2$-graph. Since $E_0$ and $E_1$ are not row-finite, neither is  $(E_0\times F_0)\sqcup(E_1\times F_1)$, but it is a finitely aligned $2$-graph. So there is an associated boundary-path groupoid whose $C^{\ast}$-algebra  is isomorphic to $C^{\ast}((E_0\times F_0)\sqcup(E_1\times F_1))$  by \cite[Theorem~6.13]{MR2184052}. Now  \[C^{\ast}((E_0\times F_0)\sqcup(E_1\times F_1))\cong C^{\ast}(E_0\times F_0)\oplus C^{\ast}(E_1\times F_1)\]   has $K$-theory $(G_0, 0)\oplus (0,G_1)\cong(G_0, G_1)$, but it is not simple, and so is not a Kirchberg algebra.

The construction in  \cite{MR2329002} of a hybrid graph $\Omega$ is   a way of gluing $E_0\times F_0$ and $E_1\times F_1$ together in such a way that the associated $C^{\ast}$-algebra is a  non-unital Kirchberg algebra in the UCT class, but retains the same $K$-theory as $C^{\ast}((E_0\times F_0)\sqcup(E_1\times F_1))$.  It is non-unital because the graph $F_1$ has infinitely many vertices.
We now go through the key points of the construction from  \cite{MR2329002} that we need.

In \cite[Definition~2.2]{MR2329002} the hybrid graph $\Omega$ is defined.  Loosely speaking, this hybrid graph is formed by joining together the $2$-graphs $E_0\times F_0$ and $E_1\times F_1$ with a directed graph in the middle and imposing no additional factorisation rules.
Using the collection of infinite paths in $\Omega$, a second-countable locally compact Hausdorff ample groupoid $\mathcal{G}$ is constructed (see \cite[Definitions~2.15 and~2.17, and Lemma~2.14]{MR2329002}). This groupoid is topologically principal, minimal and locally contracting  by \cite[Lemma~2.18]{MR2329002}.  Since $\Gg$  is  topologically principal and minimal, its reduced $C^{\ast}$-algebra is simple by \cite[Proposition~4.6]{Renault}, and since $\Gg$ is topologically principal and locally contracting, its reduced $C^{\ast}$-algebra is purely infinite by \cite[Proposition~2.4]{AD}.

Spielberg also considers the universal  $C^{\ast}$-algebra $C^{\ast}(\Omega)$ of the hybrid graph; this $C^{\ast}$-algebra is generated by projections and partial isometries associated to  vertices and edges of $\Omega$, respectively,   subject to relations given in \cite[Definition~3.3]{MR2329002}. These relations resemble the Cuntz--Krieger relations for graph and higher-rank graph algebras.
In \cite[Corollary~3.19]{MR2329002} a type of gauge-invariant uniqueness theorem is used to show that there is an isomorphism
\begin{equation}\label{eq-pi}
\pi:C^{\ast}(\Omega)\to C^{\ast}(\Gg)
\end{equation} and that $C^{\ast}(\mathcal{G})\cong C_r^*(\mathcal{G})$.  Further, $C^{\ast}(\Gg)$ is shown to be Morita equivalent, hence stably isomorphic,  to the crossed product of an AF algebra by $\Z^2$. It follows that  $C^{\ast}(\mathcal{G})$ is  nuclear \cite[Theorem~15]{MR2276659} and is in the UCT class \cite[Proposition~2.4.7]{Rordam}. Since $\Gg$ is \'etale, the nuclearity of $C^{\ast}_r(\Gg)$ implies that $\Gg$ is measurewise amenable \cite[Corollary~6.2.14]{ADR}, and since $\Gg$ has countable orbits it then follows from  \cite[Theorem~3.3.7]{ADR} that $\Gg$ is amenable.

Using that $C^{\ast}(\Gg)$ is universal for  generators and relations, it is proved in  \cite[Theorem~4.7]{MR2329002} that the inclusion of $(E_0\times F_0)\sqcup (E_1\times F_1)$ in the hybrid graph $\Omega$ induces an injective  homomorphism 
\begin{equation}
i:C^{\ast}((E_0\times F_0)\sqcup (E_1\times F_1))\to C^{\ast}(\Omega),
\end{equation}
 and that this homomorphism induces an isomorphism at the level of $K$-theory.  Thus $K_*(C^{\ast}(\mathcal{G}))\cong (G_0,G_1)$). As observed by Spielberg, this shows that every non-unital Kirchberg algebra in the UCT class is the $C^{\ast}$-algebra of an ample, amenable and locally contracting groupoid.

We now restrict the groupoid $\Gg$ to prove our theorem. The isomorphism $\pi:C^{\ast}(\Omega)\to C^{\ast}(\mathcal{G})$ mentioned at \eqref{eq-pi}  is defined on page~359 of \cite{MR2329002}. It carries the vertex projection $p_{(c,w)}\in C^{\ast}(\Omega)$ corresponding to the vertex $(c,w)\in E_0\times F_0\subset\Omega$ to the characteristic function $1_U\in C^{\ast}(\Gg)$, where  $U:=Z((c,w))\times\{0\}\times Z((c,w))$ is a compact open subspace of the unit space of $\Gg$ (see the  first sentence of \cite[\S3]{MR2329002} and \cite[Definition~2.17]{MR2329002}).
Set \[\Gg'\coloneqq U\Gg U=\{g\in \Gg\colon s(g), r(g)\in U\}.\]
Then $\Gg'$ is a locally compact groupoid with compact unit space $U$; it is minimal, topologically principal, locally contracting and amenable because $\Gg$ is. The amenability of $\Gg'$ implies that $C^{\ast}(\Gg')$ is in the UCT class  \cite[Theorem~0.1]{Tu}.

For $f\in C_c(\Gg)$ we have $1_Uf1_U\neq 0$ if and only  if $f$ has support in $U\Gg U$. Since $U\Gg U$ is open in $\Gg$, the inclusion induces a homomorphism $\iota: C_c(\Gg') \to C_c(\Gg)$ which has range $1_U C_c(\Gg) 1_U$. Since $\iota$ is bounded, it extends to a homomorphism $\iota: C^{\ast}(\Gg) \to C^{\ast}(\Gg')$ with range $1_U C^{\ast}(\Gg) 1_U$. Similarly, restriction of functions $r:C_c(\Gg)\to C_c(\Gg')$ is bounded and extends to a homomorphism $r:C^{\ast}(\Gg)\to C^{\ast}(\Gg')$.  It is easy to check that $\iota\circ r(f)=f$ and $r\circ \iota(g)=g$ for $f\in C_c(\Gg)$ and $g\in 1_U C_c(\Gg) 1_U$. Thus
\begin{equation}\label{eq-iota}
\iota:C^{\ast}(\Gg')\to 1_U C^{\ast}(\Gg) 1_U
\end{equation}
is an isomorphism.
In summary, we have the following $C^{\ast}$-algebras and injective homomorphisms between them:

\begin{equation*}\label{eq-injections}
\begin{tikzpicture}[>=stealth, yscale=0.6]
\node (P1) at (-2,0) {$C^{\ast}((E_0\times F_0)\sqcup (E_1\times F_1))$};
\node (P2) at (3,0) {$C^{\ast}(\Omega)$};
\node (P3) at (7,0) {$C^{\ast}(\Gg)$};
\node (P4) at (7,-3) {$1_UC^{\ast}(\Gg)1_U$};
\node (P5) at (3,-3) {$C^{\ast}(\Gg')$};
\draw[->](P1)--(P2) node[above, midway] {$i$};
\draw[->] (P2)--(P3) node[above, midway] {$\pi$} node[below, midway] {$\cong$};
\draw[->] (P4)--(P3) node[right, midway] {$j$};
\draw[->] (P5)--(P4)node[below, midway] {$\cong$} node[above, midway] {$\iota$};
\end{tikzpicture}
\end{equation*}
where $\pi, i,\iota$ were discussed at \eqref{eq-pi}--\eqref{eq-iota} and $j$ is inclusion.
 Since $C^{\ast}(\Gg)$ is simple, it follows that $\overline{C^{\ast}(G)1_UC^{\ast}(\Gg)}=C^{\ast}(\Gg)$. Thus $1_UC^{\ast}(\Gg)1_U$  is a full corner in $C^{\ast}(\Gg)$ and $j$ induces an isomorphism on $K$-theory by \cite[Proposition~1.2]{Paschke}. Thus we obtain an isomorphism
 \begin{equation}\label{eq-composition}
( i_*)^{-1}\circ(\pi^{-1}\circ j\circ \iota)_*: K_*(C^{\ast}(\Gg'))\to K_*\big(C^{\ast}((E_0\times F_0)\sqcup (E_1\times F_1)) \big).
 \end{equation}
Combining~\eqref{eq-composition} with  the calculations at \eqref{eq-00}--\eqref{more-Kunneth} give that $K_* (C^{\ast}(\Gg') ) \cong (G_0, G_1)$.

It remains to show that the isomorphism of $K_0 (C^{\ast}(\Gg') )$ onto $G_0$ sends $[1_U]$ to $c$. We have
\[
( i_*)^{-1}\circ(\pi^{-1}\circ j\circ \iota)_*([1_U])=( i_*)^{-1}([\pi^{-1}(1_U)])=( i_*)^{-1}([p_{(c,w)}])=[p_{(c,w)}].
\]
Thus it remains  to trace through
\begin{align*}
K_0\big(C^{\ast}((E_0\times F_0)\sqcup &(E_1\times F_1)) \big)\cong K_0\big(C^{\ast}(E_0\times F_0) \big)\cong K_0\big( C^{\ast}(E_0)\otimes C^{\ast}(F_0)\big)\\
&\stackrel{\alpha^{-1}}{\cong\ }K_0(C^{\ast}(E_0))\otimes K_0(C^{\ast}(F_0))\cong K_0(C^{\ast}(E_0))\cong G_0.
\end{align*}
It follows from Remark~\ref{remark-kunneth} below  that $\alpha([p_c]\otimes [1_{C^{\ast}(F_0)}])=[p_c\otimes 1_{C^{\ast}(F_0)}]$, and thus we have
\[
[p_{(c,w)}] \mapsto [p_{(c,w)}]\mapsto [p_c\otimes p_w]=[p_c\otimes 1_{C^{\ast}(F_0)}]\mapsto [p_c]\otimes [1_{C^{\ast}(F_0)}]\mapsto [p_c]\mapsto c.\qedhere
\]
\end{proof}

\begin{rem}\label{remark-kunneth} Let $A$ and $B$ be unital $C^{\ast}$-algebras with $K_*(B)$ torsion free.  Then the    K\"unneth isomorphism $\alpha:K_0(A)\otimes K_0(B)\to K_0(A\otimes B)$ of \cite[Theorem~2.14]{Schochet}
 is induced by the natural map from $M_r(A)\otimes M_s(B)$ to $M_{rs}(A\otimes B)=M_r(M_s(A\otimes B))$ such that
 \[a\otimes b=(a_{ij})_{i,j\in\{1,\ldots,r\}}\otimes (b_{kl})_{k,l\in\{1,\ldots,s\}}\mapsto ((a_{ij}\otimes b_{kl})_{k,l\in\{1,\ldots,s\}})_{i,j\in\{1,\ldots,r\}}.\] Then $\alpha$  is extended to non-unital $A$ using the exact sequence $0\to A\to A^+\to \C\to 0$.  It follows that when $A$ is non-unital, we have  $\alpha([p]\otimes[1_B])=[p\otimes 1_B]$ for a projection $p$ in $A$.
\end{rem}

Combining Theorem~\ref{thm-main} with Spielberg's result for non-unital Kirchberg algebras (which we outlined in the proof above) we obtain:

\begin{cor} All Kirchberg algebras in the UCT class are isomorphic to $C^*$-algebras of Hausdorff, ample, amenable and locally contracting  groupoids.
\end{cor}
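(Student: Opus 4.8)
The plan is to split into the two cases that have already been dealt with. Every $C^{\ast}$-algebra is either unital or non-unital, so for a Kirchberg algebra $A$ in the UCT class it suffices to treat each of these possibilities separately.

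If $A$ is unital, I would apply Theorem~\ref{thm-main} directly: it produces a Hausdorff, ample, amenable and locally contracting groupoid $\Gg$ with $A \cong C^{\ast}(\Gg)$, and nothing further is needed.

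If $A$ is non-unital, I would invoke the construction recalled at the start of the proof of Theorem~\ref{thm-main} (due to Spielberg, \cite{MR2329002}). Applied to the pair of countable abelian groups $(K_0(A), K_1(A))$, it yields a Hausdorff, ample, amenable and locally contracting groupoid $\Gg$ whose $C^{\ast}$-algebra is a non-unital Kirchberg algebra in the UCT class with $K_*(C^{\ast}(\Gg)) \cong (K_0(A), K_1(A))$. Now a non-unital separable simple purely infinite $C^{\ast}$-algebra is stable, so both $A$ and $C^{\ast}(\Gg)$ are stable Kirchberg algebras in the UCT class; by the Kirchberg--Phillips classification theorem \cite{Kirchberg, MR1745197} such algebras are classified up to isomorphism by their graded $K$-theory, and hence $A \cong C^{\ast}(\Gg)$.

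I do not expect any genuine obstacle: the substantive work lies entirely in Theorem~\ref{thm-main} and in \cite{MR2329002}, and the corollary is just the observation that the unital and non-unital cases together exhaust all Kirchberg algebras in the UCT class. The only point worth a line of care is that in the non-unital case the classifying invariant is the graded $K$-group with \emph{no} distinguished element --- which is exactly the invariant the Kirchberg--Phillips theorem uses for stable Kirchberg algebras, the relevant algebras here being stable precisely because they are non-unital; consequently no analogue of the bookkeeping of $[1_A]$ carried out in the proof of Theorem~\ref{thm-main} is required.
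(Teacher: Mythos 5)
Your proof is correct and follows the same route as the paper: the corollary is obtained by combining Theorem~\ref{thm-main} for the unital case with Spielberg's result from \cite{MR2329002} for the non-unital case, which is exactly what the paper does. Your extra remark that non-unital simple purely infinite separable $C^{\ast}$-algebras are stable (Zhang's dichotomy), so that graded $K$-theory without a distinguished class is the right invariant, is a correct unpacking of the non-unital classification step that the paper leaves implicit in citing Spielberg.
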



\begin{thebibliography}{10}

\bibitem{AD} Claire Anantharaman-Delaroche.  Purely infinite $C^{\ast}$-algebras arising from dynamical systems. \emph{Bull. Soc. Math. France}, {125}:199--225, 1997.

\bibitem{ADR}
 Claire Anantharaman-Delaroche and Jean Renault. \emph{Amenable groupoids}, volume~36 of  \emph{Monographies de L'Enseignement Math\'ematique},  L'Enseignement Math\'ematique, Geneva, 2000.


%
%

\bibitem{BCSS}
Jonathan H. Brown, Lisa Orloff Clark, Adam Sierakowski, and Aidan Sims.
Purely infinite simple $C^{\ast}$-algebras that are principal groupoid $C^{\ast}$-algebras. \emph{J. Math. Anal. Appl.}, 439(1): 213--234, 2016.




\bibitem{Eliott-Rordam}  George A. Elliott and Mikael R\o rdam.  Classification of certain infinite simple $C^{\ast}$-algebras. II. \emph{Comment. Math. Helv.}, 70: 615--638, 1995.


\bibitem{MR2184052}
Cynthia Farthing, Paul~S. Muhly, and Trent Yeend.
\newblock Higher-rank graph {$C^{\ast}$}-algebras: an inverse semigroup and groupoid
  approach.
\newblock {\em Semigroup Forum}, 71(2):159--187, 2005.

\bibitem{MR2276659}
Astrid an~Huef, Iain Raeburn, and Dana~P. Williams.
\newblock Properties preserved under {M}orita equivalence of {$C^{\ast}$}-algebras.
\newblock {\em Proc. Amer. Math. Soc.}, 135(5):1495--1503, 2007.


\bibitem{Katsura2008}
Takeshi Katsura.
A class of $C^{\ast}$-algebras generalizing both graph algebras and homeomorphism $C^{\ast}$-algebras IV, pure infiniteness.
\emph{J. Funct. Anal.}, 254(5): 1161--1187, 2008.

\bibitem{Kirchberg} Eberhard Kirchberg. The classification of purely infinite $C^*$-algebras using Kasparov's theory. Preprint, 1994.

\bibitem{MR1745529}
Alex Kumjian and David Pask.
\newblock Higher rank graph {$C^\ast$}-algebras.
\newblock {\em New York J. Math.}, 6:1--20, 2000.

\bibitem{Lin}
Hiaxin Lin.  Classification of simple $C^*$-algebras of tracial topological rank zero. \emph{Duke Math. J.}, 125:91--119, 2004.

\bibitem{Paschke} William L. Paschke. $K$-theory for  actions of the circle group on $C^{\ast}$-algebras.
\emph{J. Operator Theory}, 6:125--133, 1981.

\bibitem{MR1745197}
N.~Christopher Phillips.
\newblock A classification theorem for nuclear purely infinite simple
  {$C^{\ast}$}-algebras.
\newblock {\em Doc. Math.}, 5:49--114, 2000.


\bibitem{Putnam}
Ian F. Putnam.
Some classifiable groupoid $C^{\ast}$-algebras with prescribed $K$-theory. \emph{Math. Ann.}, 370(3,4): 1361--1387, 2018.

%

\bibitem{Renault} J. Renault, \emph{A groupoid approach to $C^{\ast}$-algebras},  volume~793 of \emph{Lecture Notes in Mathematics},  Springer, Berlin, 1980.




\bibitem{Rordam-Sierakowski} Mikael R\o rdam and Adam Sierakowski.
Purely infinite $C^*$-algebras arising from crossed products.
\emph{Ergodic Theory Dynam. Systems}, 32(1):273--293, 2012.

\bibitem{Rordam}  Mikael R\o rdam and Erling St\o rmer. \emph{Classification of nuclear $C^{\ast}$-algebras. Entropy in operator algebras}, volume~126 of  \emph{Encyclopaedia of Mathematical Sciences},  Springer-Verlag, Berlin, 2002.

\bibitem{RSS} Efren Ruiz,  Aidan Sims, and Adam P. W. S\o rensen. UCT-Kirchberg algebras have nuclear dimension one. \emph{Adv. Math.}, 279:1--28,  2015.


\bibitem{Schochet} Claude Schochet. Topological methods for $C^{\ast}$-algebras. II. Geometric resolutions and the K\"unneth formula. \emph{Pacific J. Math.},  98:443--458,  1982.

\bibitem{MR2329002}
Jack Spielberg.
\newblock Graph-based models for {K}irchberg algebras.
\newblock {\em J. Operator Theory}, 57(2):347--374, 2007.

\bibitem{Spielberg-TAMS-2009}
Jack Spielberg.
Semiprojectivity for certain purely infinite $C^*$-algebras.
\emph{Trans. Amer. Math. Soc.}, 361(6): 2805--2830, 2009.

\bibitem{MR2377136}
Jack Spielberg.
\newblock Non-cyclotomic presentations of modules and prime-order automorphisms
  of {K}irchberg algebras.
\newblock {\em J. Reine Angew. Math.}, 613:211--230, 2007.

\bibitem{Tu} J.L. Tu.  La conjecture de Baum-Connes pour les feuilletages moyennables.  \emph{K-Theory},  17:215--264,  1999.

\bibitem{Yeend} Trent Yeend. Groupoid models for the $C^{\ast}$-algebras of topological higher-rank graphs. \emph{J. Operator Theory}, 57(1): 95--120, 2007.

\end{thebibliography}
\end{document}